\documentclass[12pt,letterpaper,reqno]{article} 

\usepackage{amsmath}
\usepackage{amsfonts}
\usepackage{amssymb}
\usepackage{amsthm}
\usepackage{amsrefs}
\usepackage{booktabs}
\usepackage{mathtools}
\usepackage{amscd}
\usepackage[margin=1.25in]{geometry}
\usepackage{enumitem}
\usepackage{tikz}
\usepackage[most]{tcolorbox}
\usepackage[colorlinks=true]{hyperref}
\usepackage[nosort,noabbrev,capitalise]{cleveref}

\usepackage{latexsym}
\usepackage{mathrsfs}
\usepackage{psfrag}
\usepackage[dvips]{epsfig}
\usepackage{epsfig}
\usepackage[all]{xy}         

\newcommand\Tstrut{\rule{0pt}{3ex}}         

\swapnumbers 

\theoremstyle{plain}                              
\newtheorem{thm}{Theorem}[section]
\newtheorem{prop}[thm]{Proposition}
\newtheorem{defn}[thm]{Definition}
\newtheorem{lem}[thm]{Lemma}
\newtheorem{cor}[thm]{Corollary}

\theoremstyle{definition}                         

\newtheorem*{eg}{Example}

\theoremstyle{remark}                             

\numberwithin{equation}{section}

\newcommand{\R}{\mathbb{R}}                     
\newcommand{\C}{\mathbb{C}}                     
\newcommand{\N}{\mathbb{N}}                     

\newcommand{\mcD}{\mathcal{D}}                     

\newtcolorbox{q1}[1][]{%
    colback=white,
    colframe=gray,
    boxrule=1pt,
    left=5pt,
    notitle,
    enhanced,
    breakable,
    }

\begin{document}
 
\title{\bf Sobolev Spaces and Their Applications to Partial Differential Equations}

\author{Shihan Kanungo}




\date{}
\maketitle

\setcounter{page}{1}
\begin{abstract}
\noindent
    This report provides an introduction to Sobolev spaces, a foundational concept in modern analysis and the theory of partial differential equations (PDEs). These spaces are useful to study, among other things, the well-posedness of partial differential equations and their approximation using finite elements. We begin with a historical overview, tracing the development of weak derivatives and the shift from classical to variational formulations of PDEs. After establishing the basic definitions and presenting key examples, we survey central theorems such as the Sobolev Embedding Theorem and Rellich's Theorem, emphasizing their significance in ensuring existence, uniqueness, and regularity of solutions. Finally, we discuss a classic application to PDEs, the Elliptic Regularity Theorem. We aim to provide a self-contained and accessible introduction for students with a background in real analysis and the theory of PDEs.
\end{abstract}

\section{Introduction}
In the 18th and 19th centuries, when PDE theory was being introduced, the focus was on classical solutions---functions smooth enough to satisfy differential equations pointwise. However, this approach soon revealed its limitations. There were many physically relevant problems, such as those arising in fuid dynamics, that admitted no smooth solutions (e.g. shock waves). The classical methods were not able to deal with this sort of situation.

The first step towards the framework of weak solutions was Dirichlet's Principle, developed by Riemann and others, which was the idea that the minimizer of a certain energy functional was the solution to the equation. This approach foreshadowed the variational formulation of PDEs.

The crucial paradigm shift happened in the 1930s with the work of Sergei Sobolev. In studying hyperbolic PDEs arising in mathematical physics and hydrodynamics, Sobolev realized that it was possible—and often necessary—to interpret derivatives in an \emph{integrated} sense rather than pointwise. He introduced what are now known as \emph{Sobolev spaces}, \( W^{m,p}(U) \), which consist of functions whose derivatives (in the weak sense) up to order \( m \) lie in the Lebesgue space \( L^p(U) \).

Sobolev’s innovation was twofold: first, to redefine the concept of a derivative so that it applied to functions not classically differentiable, and second, to endow the resulting function spaces with norms that rendered them complete and well-suited to functional analysis techniques. His work laid the groundwork for the existence and uniqueness theorems for weak solutions of PDEs and bridged the divide between pure and applied analysis.

These spaces play a central role in modern analysis, especially in the development of variational methods and the theory of elliptic, parabolic, and hyperbolic PDEs. They provide the natural functional framework for formulating and solving PDEs in a weak sense. This weak formulation is more flexible and more inclusive than classical approaches, and often leads to existence and uniqueness results under broader conditions.

This report aims to introduce the basic definitions and examples of Sobolev spaces, explore key theoretical results, and demonstrate their application to PDEs and related fields.

\section{Basic Definitions and Examples}

We start with some notation. Throughout, we will fix a positive integer $n$ and we will be working in the space $\R^n$. For a tuple $\alpha=(\alpha_1,\ldots,\alpha_n)$ of nonnegative integers, we define \[\partial^\alpha  = \frac{\partial^{\alpha_1+\cdots+\alpha_n}}{(\partial^{\alpha_1}x_1)\cdots (\partial^{\alpha_n}x_n)}.\] We set $|\alpha| = \alpha_1+\cdots +\alpha_n$ to be the degree of $\alpha$.

To define the Sobolev spaces, we first need to define \textit{distributions}.

Let $C_c^\infty$ denote the set of $C^\infty$ functions on $\R^n$ with \textit{compact support}, i.e. $\phi\in C_c^\infty$ if $\mathrm{supp}( \phi) = \{x:\phi(x)\ne 0\}$ is contained in a compact set. For an open set $U\subset \R^n$, we define $C_c^\infty(U)$ to be the set of functions in $C_c^\infty$ with support contained in $U$. 

A function $f\in L^p(U)$ can be identified with the map $F: C_c^\infty(U) \to \R$ given by $\phi \mapsto \int f\phi$. In fact, the map $F$ is a continuous linear functional. However, not all continuous linear functionals on $C_c^\infty(U)$ are of this form. We call a continuous linear functional on $C_c^\infty(U)$ a \textit{distribution} on $U$, and denote the space of distributions on $U$ by $\mcD'(U)$. Distributions are also known as, and can be thought of as,  ``generalized functions''. 

Every function $f$ such that $\int_C |f|<\infty$ for all compact sets $C$ defines a distribution, which we will also denote by $f$. Such functions are called \textit{locally integrable}. A distribution which corresponds to a function is called \textit{regular}.

As an example of a distribution which is not regular, we can define $\delta(\phi) = \phi(0)$. The map $\delta$ is called the \textit{delta function}, and is extremely important. More generally, the map $\phi\mapsto \partial^\alpha \phi(0)$ is a distribution for any $\alpha$. We will see later how these maps can be thought of as ``distributional derivatives'' of the delta function.

If $F$ is a distribution, we use $\langle F, \phi\rangle$ to denote the image of $\phi$ under $F$. We will also sometimes use
\[\int F(x) \phi(x),\]
even though $F$ is not a function. For example, we can write
\[\int \delta(x) \phi(x) = \phi(0).\]

We say that two distributions $F,G$ are equal ($F=G$) if $\langle F,\phi\rangle = \langle G,\phi\rangle$. We say $F=G$ on a open set $V$ if $\langle F,\phi\rangle = \langle G,\phi\rangle$ for all $\phi$ supported in $V$.

Clearly if $F=G$ on $V_1\cup V_2$, $F=G$ on both $V_1$ and $V_2$. The converse, though not immediately clear, nevertheless turns out to be true.

\begin{prop}
    Let $\{V_\alpha\}$ denote a collection of open subsets and let $V = \bigcup_\alpha V_\alpha$. If $F,G$ are distributions and $F=G$ on each $V_\alpha$, then $F=G$ on $V$.
\end{prop}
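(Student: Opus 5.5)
We will prove this with a partition of unity. Set $H = F - G$; this is again a distribution (a continuous linear functional), and $H = 0$ on each $V_\alpha$. It suffices to show that $\langle H,\phi\rangle = 0$ for every $\phi \in C_c^\infty$ with support contained in $V$.

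Fix such a $\phi$ and let $K$ be the closure of its support, which is a compact subset of $V$. The sets $V_\alpha$ form an open cover of $K$, so by compactness finitely many of them, say $V_1,\ldots,V_N$, already cover $K$. The key step is then to build a smooth partition of unity subordinate to this finite cover. We want functions $\psi_1,\ldots,\psi_N \in C_c^\infty$ with $\mathrm{supp}(\psi_j)$ compact and contained in $V_j$, and with $\sum_j \psi_j = 1$ on a neighborhood of $K$.

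Given these, each $\psi_j\phi$ lies in $C_c^\infty$ and is supported in $V_j$, so $\langle H,\psi_j\phi\rangle = 0$. Since $\phi = \sum_j \psi_j \phi$ everywhere, linearity gives
\[
\langle H,\phi\rangle = \sum_{j=1}^N \langle H,\psi_j\phi\rangle = 0.
\]
Continuity of $H$ is not even needed here, because the sum is finite.

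The main obstacle is constructing the partition of unity, and I would do it in the standard way.
\begin{enumerate}
\item For each $x \in K$, choose $j$ with $x \in V_j$ and a closed ball $\overline{B(x,r_x)} \subset V_j$. Finitely many of the open balls $B(x,r_x)$ cover $K$.
\item Let $K_j$ be the union of those chosen closed balls that lie in $V_j$. Then $K_j$ is a compact subset of $V_j$, and $K \subset \bigcup_j \operatorname{int} K_j$.
\item Using a mollified indicator function, or sums of bump functions $e^{-1/(r^2-|y|^2)}$ on slightly enlarged balls, produce $\eta_j \in C_c^\infty(V_j)$ with $0 \le \eta_j \le 1$ and $\eta_j = 1$ on $K_j$.
\item Define
\[
\psi_1 = \eta_1,\qquad \psi_j = \eta_j\prod_{i<j}(1-\eta_i)\quad (j \ge 2).
\]
\end{enumerate}
The telescoping identity
\[
1 - \sum_{j=1}^N \psi_j = \prod_{j=1}^N (1-\eta_j)
\]
shows that $\sum_j \psi_j = 1$ on $\bigcup_j K_j \supset K$. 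Moreover, $\mathrm{supp}(\psi_j) \subset \mathrm{supp}(\eta_j) \subset V_j$, which is exactly what the argument above requires.
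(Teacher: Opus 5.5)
The paper states this proposition without proof, so there is no argument of its own to compare against. Your partition-of-unity proof is the standard one, and it is correct. The finite subcover of $K$, the identity $\phi=\sum_j\psi_j\phi$ (both sides vanish off $K$), the construction of the $K_j$ and of the $\eta_j$ by mollifying an indicator of a small neighborhood of $K_j$, and the telescoping identity $1-\sum_j\psi_j=\prod_j(1-\eta_j)$ all check out. You are also right that only finite additivity of $H$ is needed.

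The one step to make explicit is ``let $K$ be the closure of its support, which is a compact subset of $V$''. This is the only place the hypothesis on $\phi$ enters. It holds only under the standard convention that ``$\phi$ is supported in $V$'' means $\overline{\{\phi\neq 0\}}$ is a compact subset of $V$.

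The paper literally writes $\mathrm{supp}(\phi)=\{x:\phi(x)\neq 0\}$. Under that reading the closure can meet $\partial V$, and the proposition is actually false. Take $n=1$, $V_1=(0,1)$, $V_2=(1,2)$, $G=0$ and $\langle F,\phi\rangle=\phi'(1)$. If $\{\phi\neq 0\}\subset V_1$ or $\{\phi\neq 0\}\subset V_2$, then $\phi$ vanishes identically on one side of $1$, so $\phi'(1)=0$ and $F=G$ on each $V_j$. Yet $\phi(x)=(x-1)b(x)$, with $b\in C_c^\infty((0,2))$ and $b(1)=1$, has $\{\phi\neq 0\}\subset V_1\cup V_2$ and $\langle F,\phi\rangle=1$.

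So state the closed-support convention as part of the setup; it is certainly what the paper intends. Do not present $K\subset V$ as a consequence of taking a closure.

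A small point in step 3: use the mollified indicator. A plain sum of bump functions is positive on $K_j$ but not identically $1$ there unless you compose it with a further smooth cutoff.
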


Due to this proposition, we can find a maximal open subset of $U$ such that $F=0$. We call the complement of this subset the \textbf{support} of $F$. Note that we cannot define the support of $F$ as the set of points $x$ with $F(x)\ne 0$, since $F$ is a distribution, not a function.

We can extend many operations on functions to distributions via the following process.

Let $T$ be a linear operator on some subspace of the locally integrable functions on an open set $U$. Suppose also, that there exists another continuous linear operator $T': C_c^\infty(U)\to C_c^\infty(U)$ such that
\[\int (Tf)\phi = \int f (T'\phi)\]
for all $f$ in the domain of $T$ and $\phi\in C_c^\infty$. (Note that the definition of the operator $T'$ is similar to the definition of an adjoint). We can then extend the map $T$ to the space of distributions by defining \[\langle TF,\phi\rangle=\langle F,T'\phi\rangle.\] Since $T'$ is continuous, the extension of $T$ is also guaranteed to be complete. 

We now discuss some important examples of this process.

First, define $Tf = \partial^\alpha f$. Then, for $\phi \in C_c^\infty(U)$, we can use integration by parts, to get \[\int (\phi^\alpha f)\phi = (-1)^{|\alpha|}\int f (\partial^\alpha \phi).\] Thus $T' = (-1)^\alpha \partial^\alpha$. For an arbitrary distribution $F$, we can define the \textbf{derivative} of $F$ by
\[\langle \partial^\alpha F,\phi\rangle = (-1)^{|\alpha|} \langle F,\partial^\alpha\phi\rangle.\]
This is a very powerful tool, because it allows us to define derivatives of arbitrary functions that are not differentiable in the classical sense, or not even continuous at all. This is going to be the main idea behind Sobolev spaces.

We can also multiply $F$ by a smooth function $\psi\in C^\infty(U)$; in this case $Tf=\psi f$, $T' = T$, and
\[\langle \psi F,\phi\rangle = \langle F, \psi\phi\rangle.\]

We are now ready to define the Sobolev Spaces. For $m\in \N$ and $1\le p \le \infty$, the \textbf{Sobolev Space} $W^{m,p}(U)$ consists of functions $u\in L^p(U)$ whose derivatives $\partial^\alpha u$ for $|\alpha|\le m$, \textit{in the distributional sense}, are regular and correspond to functions in $L^p(U)$. The parameter $m$ measures ``how differentiable'' functions are, analogously to $C^m(U)$. 

An example of why Sobolev Spaces are useful is as follows.

\begin{eg}
    The Sobolev space $W^{1,2}(U)$ consists of the $L^2$ functions on $U$ with a $L^2$ first derivative. If $U = (-1,1)$, then the function $f=|x|$ is an element of $W^{1,2}(U)$. To see this, note that $f$ is square-integrable, and it is easily verified that the (distributional) derivative of $f$ regular and corresponds to the step function
    \[f'(x) = \begin{cases}
        1 & x>0\\
        -1 & x<0
    \end{cases},\]
    which is also $L^2$. Note that we do not need to define $f'(0)$ because we are considering $f'$ as an element of $L^2(U)$. The function $f$ has has no derivative in the classical sense, but using distributions we were able to define $f'$ in a way that behaves exactly like the classical derivative.
\end{eg}

\begin{eg}
    What is the second derivative of the function $f$ above? Using integration by parts, we have
    \[\int_{-1}^1 f''(x) \phi(x) = - \int_{-1}^1 f'(x)\phi'(x) = \int_{-1}^0 \phi'(x) - \int_{0}^1 \phi(x) = 2\phi(0).\]
    (The boundary terms vanish since $\phi$ is supported in $(-1,1)$). Then it follows that $\partial^2 f=2\delta$, which doesn't correspond to any regular function. Thus, $f$ would not be an element of the Sobolev space $W^{2,2}(U)$.
\end{eg}

Clearly, $W^{m,p}(U)$ is a vector space. Additionally,
\[\Vert u\Vert_{m,p,U} = \left(\sum_{|\alpha|\le m} \int_U|\partial^\alpha u|^p\right)^{1/p}\]
defines a norm on $W^{m,p}(U)$ for $1\le p <\infty$ and for $p=\infty$, the norm is given by
\[\Vert u \Vert_{m,\infty, U} = \max_{0\le |\alpha|\le m}\Vert \partial^\alpha u\Vert_\infty.\]

Just like how $L^p(U)$ is an inner product space when $p=2$, so also is $W^{m,2}(U)$.  We use the notation $H^m(U) \coloneqq W^{m,2}(U)$; the inner product is given by
\[\langle u,v\rangle_{m,U} = \sum_{|\alpha|\le m}\int_U \partial^\alpha u(x) \overline{\partial^\alpha v(x)}.\]
It is easy to see that this inner product matches the norm defined earlier.

The Fourier transform converts derivatives into algebraic operations. Because of this, it will be useful to give an alternate definition of $H^m(\R^n)$ using the Fourier transform.

Our notation for the Fourier transform of $f$ will be $\widehat{f}(\xi)$, where $\xi\in \R^N$. We write $\xi^\alpha = \xi_1^{\alpha_1}\cdots \xi_n^{\alpha_n}$, similarly to our notation $\partial^\alpha$. Using basic results about the Fourier transform, it is not hard to check, that $f\in H^m$ if and only if $\xi^\alpha \widehat{f}\in L^2$ for all $|\alpha|\le m$. Additionally, there exist positive constants $C_1,C_2$ such that
\[C_1(1+|\xi|^2)^m \le \sum_{|\alpha|\le m}|\xi^\alpha|^2 \le C_2 (1+ |\xi|^2)^m.\]
To see this, note that when $(1+|\xi|^2)^k$ and $ \sum_{|\alpha|\le m}|\xi^\alpha|^2$ are expanded, they have the same terms, only with different coefficients, which depend only on $m$. From here, the statement is clear.
It then follows that $f\in H^m$ if and only if $(1+|\xi|^2)^{m/2}\widehat{f}$ is in $L^2$. Additionally, it follows that the norm
\[\Vert f\Vert_{(m)}\mapsto \left\lVert (1+|\xi|^2)^{m/2}\widehat{f}\right\rVert_2\]
is equivalent to the norm $\Vert\cdot \Vert_{m,2, \R^n}$ defined above. However, this norm can be extended to arbitrary values of $m\in \R$, and hence we will use this norm in the definition of the Sobolev Space.

\begin{defn}
    For $m\in \R$, the \textbf{Sobolev Space} $H^m$ is defined by
    \[H^m = \{f\in L^2 (\R): (1+ |\xi|^2)^{m/2}\widehat{f}\in L^2\}.\]
\end{defn}
As seen above, this definition matches with our original definition when $p=2$. 

\section{The Sobolev Embedding Theorem}
We can think of the Sobolev space $W^m(U)$ as the $L^2$ functions that are ``$L^2$-differentiable'' up to order $m$. This does not coincide with the ordinary notion of smoothness: for example the function $|x|$ is ``$L^2$-differentiable'' up to order $1$, but is not $C^1$. Thus, it makes sense to ask the following question: what can we say about the smoothness of elements of the Sobolev space $W^m(U)$?

This question is answered by the Sobolev Embedding Theorem, and the answer turns out to be quite nice.

We let $C_0$ denote the space of continuous functions $f$ on $\R^n$ to vanish at $\infty$; i.e. the space $\{x: |f(x)|>\varepsilon\}$ is compact for any $\varepsilon>0$. We let
\[C_0^k = \{f\in C_0: \partial^\alpha f \in C_0 \quad\forall |\alpha|<k\},\]
which means that $f$ has all (classical) derivatives up to order $k$ continuous and vanishing at $\infty$.
\begin{thm}[Sobolev Embedding Theorem]
    Suppose $m>k+n/2$. Then,
    \begin{enumerate}
        \item[$(1)$] If $f\in H^m$, then $\widehat{\partial^\alpha f}\in L^1$ and $\Vert \widehat{\partial^\alpha f}\Vert_1 \le C \Vert f \Vert_{(m)}$ for all $|\alpha|\le k$, where $C$ is a constant depending only on $m-k$.
        \item[$(2)$] $H^m \subset C_0^k$ and the inclusion map is continuous.
    \end{enumerate}
\end{thm}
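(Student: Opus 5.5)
For part (1), I will work entirely on the Fourier side. Since $\widehat{\partial^\alpha f}(\xi) = (2\pi i\xi)^\alpha \widehat f(\xi)$ (up to the normalization constant of the Fourier transform, which I absorb into $C$), and $|\xi^\alpha| \le |\xi|^{|\alpha|} \le (1+|\xi|^2)^{k/2}$ for $|\alpha|\le k$, I will write
\[
|\widehat{\partial^\alpha f}(\xi)| \lesssim (1+|\xi|^2)^{k/2}|\widehat f(\xi)| = (1+|\xi|^2)^{(k-m)/2}\cdot (1+|\xi|^2)^{m/2}|\widehat f(\xi)|.
\]
Then Cauchy--Schwarz gives
\[
\Vert \widehat{\partial^\alpha f}\Vert_1 \lesssim \left(\int_{\R^n} (1+|\xi|^2)^{k-m}\,d\xi\right)^{1/2} \Vert f\Vert_{(m)}.
\]
The integral is finite precisely when $2(m-k)>n$, which is the hypothesis $m>k+n/2$. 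To check this I will pass to polar coordinates: the integrand behaves like $r^{2(k-m)}r^{n-1}$ at infinity. The constant then depends only on $m-k$ (and the fixed dimension $n$).

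For part (2), I will use Fourier inversion. By part (1), $g_\alpha := \widehat{\partial^\alpha f}\in L^1$ for each $|\alpha|\le k$. By the Riemann--Lebesgue lemma (applied to the inverse transform), $\check g_\alpha$ is continuous and vanishes at infinity, with $\Vert \check g_\alpha\Vert_\infty \le \Vert g_\alpha\Vert_1$. Since $\partial^\alpha f\in L^2$ as a tempered distribution has Fourier transform $g_\alpha$, Fourier inversion on $L^2$ (or on tempered distributions) gives $\partial^\alpha f = \check g_\alpha$ almost everywhere. Thus after modifying $f$ on a null set, $f$ and all its distributional derivatives up to order $k$ are represented by functions in $C_0$.

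The remaining point, which I expect to be the main subtlety, is upgrading from ``distributional derivatives are continuous'' to ``$f$ is classically $C^k$ with these derivatives.'' I will handle this directly. Define $\tilde f(x) = \int \widehat f(\xi)e^{2\pi i x\cdot\xi}\,d\xi$. Since $\xi^\alpha\widehat f\in L^1$ for all $|\alpha|\le k$, differentiation under the integral sign is justified by dominated convergence, applied one derivative at a time. This shows $\tilde f\in C^k$ with $\partial^\alpha \tilde f = \check g_\alpha$ classically, and $\tilde f = f$ a.e. Finally, continuity of the inclusion follows from
\[
\sup_{|\alpha|\le k}\Vert \partial^\alpha f\Vert_\infty \le \sup_{|\alpha|\le k}\Vert \widehat{\partial^\alpha f}\Vert_1\le C\Vert f\Vert_{(m)},
\]
where $C_0^k$ carries the norm $\max_{|\alpha|\le k}\Vert\partial^\alpha\cdot\Vert_\infty$.
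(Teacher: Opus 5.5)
Your proposal is correct and follows the paper's proof. Part (1) uses the same ingredients: the bound $|\xi^\alpha|\le(1+|\xi|^2)^{k/2}$, then Cauchy--Schwarz against the weight $(1+|\xi|^2)^{k-m}$, which is integrable exactly when $m-k>n/2$. Part (2) is the same Riemann--Lebesgue plus Fourier inversion argument. Your extra dominated-convergence step, showing that the inverse transform of $\widehat f$ is classically $C^k$ with derivatives equal to the inverse transforms of $\widehat{\partial^\alpha f}$, just makes explicit a point the paper skips when it says ``claim (2) follows.''
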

\begin{proof}
    For $f\in H^m$ and $|\alpha|\le k$, we have
    \begin{align*}
        (2\pi)^{-|\alpha|} \Vert \widehat{\partial^\alpha f}\Vert_1 &=(2\pi)^{-|\alpha|}\int |(\widehat{\partial^\alpha f})(\xi) |\, d\xi\\  &= \int |\xi^\alpha \widehat{f}(\xi)|\, d\xi \\
        &\le \int (1+|\xi|^2)^{k/2} |\widehat{f}(\xi)|\, d\xi.
    \end{align*}
    This is because $|\alpha| \le k$ implies $|\xi|^{2\alpha}$ is a term in the expanded form of $(1+|\xi|^2)^k$, so $|\xi^\alpha|\le (1+|\xi|^2)^{k/2}$. Now applying the Cauchy-Schwarz inequality,
    \begin{align*}
        \int (1+|\xi|^2)^{k/2} |\widehat{f}(\xi)| \, d\xi &\le \left( \int (1+|\xi|^2)^{m} |\widehat{f}(\xi)|^2\, d\xi\right)^{1/2}\left( \int (1+|\xi|^2)^{k-m}\, d\xi\right)^{1/2}\\
        &= \Vert f \Vert_{(m)}\left( \int (1+|\xi|^2)^{k-m}\, d\xi\right)^{1/2}.
    \end{align*}
    Since $m-k< - n/2$, the last integral above is finite. Combining everything together, we have proven claim (1). The \textit{Riemann-Lebesgue Lemma} says that the Fourier transform of a $L^1$ function belongs to $C_0$. Using the Fourier inversion theorem, it follows that $\partial^\alpha f\in C_0$. From here, claim (2) follows.  
\end{proof}

What the Sobolev Embedding Theorem implies is that if $f\in H^m$, then $f$ must have some degree of smoothness; namely it must have at least $\lceil m-\frac n2 \rceil -1$ continuous derivatives.

For example, if $m=1$ and $n=1$, which means that $f$ has one ``$L^2$-derivative'', then $f$ must be in $C_0^0=C_0$, i.e. $f$ must be continuous. Thus, while elements of $H^1(\R)$ may not be differentiable in the classical sense, we know that they must be continuous. 

Another immediate corollary of the Sobolev Embedding theorem is as follows.
\begin{cor}\label{cor: infinite L2 differentiable implies Cinfty}
    If $f\in H^m$ for all $m$, then $f\in C^\infty$. 
\end{cor}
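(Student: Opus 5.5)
The plan is to apply the Sobolev Embedding Theorem once for each order of differentiability. Fix an arbitrary $k\in\N$. We want to show that $f\in C^k$, since $k$ is arbitrary and $C^\infty=\bigcap_k C^k$. Choose any integer $m>k+n/2$, for instance $m=k+\lfloor n/2\rfloor+1$. By hypothesis $f\in H^m$. Part $(2)$ of the Sobolev Embedding Theorem then gives $f\in C_0^k$.

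The one subtlety is that $f$ is a priori only an element of $L^2$, that is, an equivalence class. So ``$f\in C^\infty$'' must mean that $f$ has a representative which is $C^\infty$. To keep this consistent across different $k$, I will use the representative supplied by the proof of the theorem, namely the Fourier inversion formula
\[
\tilde f(x)=(2\pi)^{-n}\int \widehat f(\xi)e^{i x\cdot\xi}\,d\xi .
\]
By part $(1)$ with $\alpha=0$, $\widehat f\in L^1$, so this is a single continuous function. It agrees with $f$ almost everywhere, and it does not depend on $k$. For each $k$, the embedding shows that this same $\tilde f$ has classical continuous derivatives up to order $k$ (one may confirm this by differentiating under the integral sign, which is justified because $\xi^\alpha\widehat f\in L^1$ for $|\alpha|\le k$ by part $(1)$). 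Hence $\tilde f\in C^k$ for every $k$, so $\tilde f\in C^\infty$.

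The main obstacle is this bookkeeping about representatives, and in particular checking that the classical derivatives of $\tilde f$ coincide with the distributional derivatives $\partial^\alpha f$. I would handle it with dominated convergence on the inversion integral, using $|\xi^\alpha \widehat f(\xi)e^{ix\cdot\xi}|\le|\xi^\alpha\widehat f(\xi)|\in L^1$. Everything else is a direct citation of the theorem.
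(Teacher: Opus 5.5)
Your proof is correct and follows the paper's argument: fix $k$, use $f\in H^m$ for some $m>k+n/2$, apply part $(2)$ of the Sobolev Embedding Theorem to get $f\in C_0^k$, and take the intersection over all $k$. The paper skips your care about representatives, and that bookkeeping is simpler than you make it: two continuous functions that agree a.e.\ agree everywhere, so the continuous representatives obtained for different $k$ automatically coincide.
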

\begin{proof}
    To see this, note that for any $k\in \N$, $f\in H^{k+n/2+1}$, so by the Sobolev Embedding Theorem, $f\in C_0^k$. Hence $f\in C^\infty$.
\end{proof}
The Sobolev Embedding Theorem is a very useful tool for solving PDEs, because it lets you ``upgrade'' a function $u\in H^m$ to a function $u\in C_0^k$.

Under some conditions, multiplication by suitably smooth functions preserves the Sobolev space $H^m$.
\begin{thm}\label{thm: multiply by smooth functions}
    Suppose $\phi \in C_0$ and 
    \[\int (1+|\xi|^2)^{a/2} |\widehat{\phi}(\xi)| \, d\xi\]
    is finite, for some $a>0$. Then for all $|s|\le a$, the map $f\mapsto \phi f$ is a bounded, continuous operator on $H^m$.
\end{thm}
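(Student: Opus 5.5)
We read the statement as: for every $s$ with $|s|\le a$, the map $f\mapsto \phi f$ is bounded on $H^s$. The natural approach is on the Fourier side. There multiplication becomes convolution, $\widehat{\phi f} = (2\pi)^{-n}\,\widehat{\phi} * \widehat{f}$ (the constant depends on the normalization, which is harmless). Boundedness then reduces to a weighted Young-type inequality. Continuity follows immediately from boundedness, since the map is linear.

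The key step is Peetre's inequality: for all $\xi,\eta\in\R^n$ and $s\in\R$,
\[(1+|\xi|^2)^{s/2} \le 2^{|s|/2}\,(1+|\xi-\eta|^2)^{|s|/2}\,(1+|\eta|^2)^{s/2}.\]
For $s\ge 0$ this follows from $1+|\xi|^2 \le 2(1+|\xi-\eta|^2)(1+|\eta|^2)$. That inequality holds because $|\xi|^2\le 2|\xi-\eta|^2+2|\eta|^2$. For $s<0$ we apply the $s\ge 0$ case with the roles of $\xi$ and $\eta$ swapped, and then invert.

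Now fix $f\in H^s$ with $|s|\le a$, and set $g(\eta)=(1+|\eta|^2)^{s/2}|\widehat f(\eta)|$, so that $g\in L^2$ with $\|g\|_2=\|f\|_{(s)}$. Also set $h(\zeta)=(1+|\zeta|^2)^{|s|/2}|\widehat\phi(\zeta)|$. Since $|s|\le a$ we have $(1+|\zeta|^2)^{|s|/2}\le(1+|\zeta|^2)^{a/2}$, so the hypothesis gives $h\in L^1$ with $\|h\|_1\le \int(1+|\zeta|^2)^{a/2}|\widehat\phi|<\infty$. Peetre's inequality yields the pointwise bound
\[(1+|\xi|^2)^{s/2}|\widehat{\phi f}(\xi)| \le C\int |\widehat\phi(\xi-\eta)|(1+|\xi|^2)^{s/2}|\widehat f(\eta)|\,d\eta \le C\,2^{|s|/2}\,(h*g)(\xi).\]
By Young's inequality ($L^1*L^2\subset L^2$) we get $\|\phi f\|_{(s)}\le C'\|h\|_1\|f\|_{(s)}$. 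The constant $C'$ depends only on $a$, $s$, and $\phi$, which gives boundedness.

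The main obstacle is justifying the convolution formula $\widehat{\phi f}=c\,\widehat\phi*\widehat f$ when $f$ is merely in $H^s$. For $s<0$, $f$ may be a tempered distribution rather than a function, so the product $\phi f$ must be defined with care. We would first prove the estimate for $f$ in the Schwartz class, or $f\in C_c^\infty$. For such $f$ the formula is classical: $\widehat\phi\in L^1$ because $a>0$ makes the weight at least $1$, and $\phi\in C_0$ is the inverse transform of $\widehat\phi$. We would then extend by density of these functions in $H^s$. The extension is the unique bounded operator, and it agrees with the pointwise or distributional product $\phi f$ when that makes sense. For $s\ge 0$ one can also argue directly: $f\in L^2$ and $\phi$ is bounded, so $\phi f\in L^2$ and the convolution identity holds by Plancherel together with the $L^1$--$L^2$ convolution theorem.
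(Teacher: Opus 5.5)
The paper gives no proof of this theorem. It states it (with $H^m$ where $H^s$ is evidently meant, which is how you read it) and then uses it, e.g.\ in the proof of \cref{prop: localized sobolev space}, only with $\phi\in C_c^\infty$. There is therefore no argument in the paper to compare against. Judged on its own, your proof is correct and is the standard one.

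\begin{itemize}
\item Your verification of Peetre's inequality is right for both signs of $s$: the case $s<0$ is exactly the $s\ge 0$ case at exponent $|s|$, with $\xi,\eta$ swapped and then inverted.
\item The pointwise domination of $(1+|\xi|^2)^{s/2}|\widehat{\phi f}|$ by a multiple of $h*g$ is right, and Young's inequality then closes the estimate.
\item The finiteness of $\|h\|_1$ is precisely where the hypothesis $|s|\le a$ enters.
\end{itemize}

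The one point worth sharpening is the one you flag yourself. For $s<0$ and $\phi$ merely continuous, $\phi f$ has no a priori meaning as a distribution, since the paper's definition of $\psi F$ via $\langle \psi F,\cdot\rangle=\langle F,\psi\,\cdot\rangle$ needs $\psi$ smooth. So your density extension is what gives the statement its content for negative $s$; it is not a mere technicality. In the cases the paper actually uses ($\phi\in C_c^\infty$), your remark that the extension agrees with the distributional product is exactly what is needed.

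If you want an intrinsic definition, set $\langle \phi f,\psi\rangle:=\langle f,\phi\psi\rangle$ using the bilinear $H^{s}$--$H^{-s}$ pairing.
\begin{itemize}
\item This is well defined because your estimate at the nonnegative level $-s=|s|\le a$ puts $\phi\psi\in H^{-s}$ for Schwartz $\psi$.
\item It agrees with your extension, because both depend continuously on $f\in H^s$ and coincide on Schwartz $f$.
\item Combined with the duality $H^s=(H^{-s})'$, it also yields the $s<0$ bound directly from the $s>0$ one. Your Peetre argument, however, already handles both signs at once.
\end{itemize}
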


Next, we have an extremely important result about Sobolev spaces, which can be thought of as a version of the Bolzano-Weierstrass theorem. 

\section{Rellich's Theorem}

\begin{thm}[Rellich's Theorem]
    Suppose that $\{f_k\}$ is a sequence of distributions in $H^m$ that are all supported in a fixed compact set $K$. Additionally, assume that the sequence $\Vert f_k \Vert_{(m)}$ is bounded. Then there exists a subsequence $\{f_{k_j}\}$ that converges in $H^s$ for all $s<m$.
\end{thm}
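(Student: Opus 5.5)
Following Folland's approach, I would work entirely on the Fourier side, using the norm $\Vert f\Vert_{(s)}=\Vert(1+|\xi|^2)^{s/2}\widehat f\Vert_2$. The plan has three steps. First, use the compact support to get uniform control of $\widehat{f_k}$ and its derivatives pointwise. Second, use Arzel\`a--Ascoli to extract a subsequence whose Fourier transforms converge uniformly on compact sets. Third, split the $H^s$ norm of differences into a low-frequency part, handled by uniform convergence, and a high-frequency part, handled by the weight $(1+|\xi|^2)^{s-m}$, which is small when $s<m$.

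\textbf{Step 1 (pointwise bounds).} Fix $\chi\in C_c^\infty$ with $\chi=1$ on a neighborhood of $K$, so that $f_k=\chi f_k$. Then $\widehat{f_k}=(2\pi)^{-n}\widehat\chi*\widehat{f_k}$ (up to the normalization constant of the Fourier transform), i.e.
\[\widehat{f_k}(\xi)=c\int \widehat\chi(\xi-\eta)\widehat{f_k}(\eta)\,d\eta .\]
Write the integrand as $(1+|\eta|^2)^{m/2}\widehat{f_k}(\eta)\cdot(1+|\eta|^2)^{-m/2}\widehat\chi(\xi-\eta)$ and apply Cauchy--Schwarz. Peetre's inequality gives
\[(1+|\eta|^2)^{-m}\le 2^{|m|}(1+|\xi|^2)^{-m}(1+|\xi-\eta|^2)^{|m|},\]
and $\widehat\chi$ is Schwartz. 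Together these yield
\[|\widehat{f_k}(\xi)|\le C(1+|\xi|^2)^{-m/2}\Vert f_k\Vert_{(m)}.\]
The same argument with $\partial_j\widehat\chi$ in place of $\widehat\chi$ (differentiating under the integral) bounds $|\partial_j\widehat{f_k}(\xi)|$ similarly. Hence on each ball $|\xi|\le R$ the functions $\widehat{f_k}$ are uniformly bounded and equicontinuous.

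\textbf{Step 2 (extraction).} Apply Arzel\`a--Ascoli on the balls $|\xi|\le R$ for $R=1,2,\dots$ and take a diagonal subsequence $f_{k_j}$ such that $\widehat{f_{k_j}}$ converges uniformly on every compact set.

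\textbf{Step 3 (Cauchy in $H^s$).} Fix $s<m$ and $\varepsilon>0$. Split
\[\Vert f_{k_i}-f_{k_j}\Vert_{(s)}^2=\int_{|\xi|\le R}+\int_{|\xi|>R}.\]
On $|\xi|>R$ we have $(1+|\xi|^2)^{s}\le(1+R^2)^{s-m}(1+|\xi|^2)^m$, so that integral is at most $(1+R^2)^{s-m}\cdot 4\sup_k\Vert f_k\Vert_{(m)}^2$. Choose $R$ so this is less than $\varepsilon$. The integral over the ball then tends to $0$ by uniform convergence on a set of finite measure. So the subsequence is Cauchy in $H^s$, and it converges because $H^s$ is complete: it is an $L^2$ space with weight $(1+|\xi|^2)^{s}$, via Plancherel. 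One diagonal subsequence works for every $s<m$ simultaneously, since Step 2 did not depend on $s$.

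The main obstacle is Step 1: getting pointwise and equicontinuity bounds on $\widehat{f_k}$ that are uniform in $k$. This is exactly where the compact support in the fixed set $K$ is used. Peetre's inequality is needed to handle negative $m$, where the $f_k$ are distributions rather than $L^2$ functions. For $m\ge0$ one could instead simply bound $\Vert f_k\Vert_{L^1}$ using $|K|^{1/2}\Vert f_k\Vert_2$ and differentiate $\widehat{f_k}(\xi)=\int_K f_k e^{-i\xi x}$ directly.
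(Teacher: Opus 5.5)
The paper states Rellich's Theorem without proof (it defers to ``the standard references''), so there is no argument in the paper to compare yours with. Your proposal is correct. It is the standard proof in Folland's \emph{Real Analysis} (Theorem 9.22), which is also the source of the paper's framework: the norm $\Vert\cdot\Vert_{(m)}$, $H^m$ for real $m$, and $H^m_{\mathrm{loc}}$.

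Each step is the right one:
\begin{itemize}
\item the Peetre plus Cauchy--Schwarz bound $|\widehat{f_k}(\xi)|\le C(1+|\xi|^2)^{-m/2}\Vert f_k\Vert_{(m)}$, and its analogue for $\nabla\widehat{f_k}$;
\item Arzel\`a--Ascoli with a diagonal extraction;
\item the split of $\Vert f_{k_i}-f_{k_j}\Vert_{(s)}^2$ at $|\xi|=R$, using $(1+|\xi|^2)^{s}\le(1+R^2)^{s-m}(1+|\xi|^2)^{m}$ on $|\xi|>R$.
\end{itemize}
Because the extraction does not depend on $s$, a single subsequence works for all $s<m$. Your treatment of $m<0$ is also appropriate. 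The theorem concerns distributions in $H^m$, even though the paper's displayed definition of $H^m$ literally requires $f\in L^2$.

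If you write it up in full, two points deserve a sentence each.

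First, the identity $\widehat{f_k}=c\,\widehat\chi*\widehat{f_k}$ holds a priori only in $\mathcal{S}'$. The right side is an absolutely convergent integral defining a continuous (indeed smooth) function, so it selects the continuous representative of $\widehat{f_k}$. That representative is the function to which your pointwise bounds and Arzel\`a--Ascoli apply; the $H^s$ norms do not depend on the choice.

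Second, completeness of $H^s$ uses the fact that every $g$ with $(1+|\xi|^2)^{s/2}g\in L^2$ defines a tempered distribution, so $g$ is the Fourier transform of an element of $H^s$. The limits obtained for different $s$ then coincide, since each has Fourier transform equal to the locally uniform limit of the $\widehat{f_{k_j}}$.
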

Rellich's Theorem is important because it lets us turn weak convergence into strong convergence, which allows as to take limits and prove existence of solutions in PDEs.

We define the \textbf{localized Sobolev space} $H_\mathrm{loc}^m(U)$ to be the set of all distributions $f$ on $U$ such that for every precompact open set $V$ with $\overline{V}\subset U$, there exists $g\in H^m$ such that $g=f$ on $V$.

\begin{prop}\label{prop: localized sobolev space}
    A distribution $f$ on $U$ is in $H_{\mathrm{loc}}^m(U)$ if and only if $\phi f\in H^m$ for every $\phi\in C_c^\infty(U)$.
\end{prop}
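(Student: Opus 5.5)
We prove the two implications separately. Both rest on one fact: multiplication by a test function preserves $H^m$. This is the content of \cref{thm: multiply by smooth functions}. If $\psi\in C_c^\infty$, then $\widehat{\psi}$ is a Schwartz function, so $\int (1+|\xi|^2)^{a/2}|\widehat{\psi}(\xi)|\,d\xi<\infty$ for every $a>0$. Taking $a\ge |m|$, the map $g\mapsto \psi g$ is therefore a bounded operator on $H^m$.

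For the forward direction, suppose $f\in H^m_{\mathrm{loc}}(U)$ and $\phi\in C_c^\infty(U)$.
\begin{enumerate}
\item Since $\mathrm{supp}(\phi)$ is a compact subset of the open set $U$, we can choose a precompact open set $V$ with $\mathrm{supp}(\phi)\subset V$ and $\overline{V}\subset U$. For instance, take $V$ to be the set of points at distance less than $\delta$ from $\mathrm{supp}(\phi)$, where $\delta$ is half the distance from $\mathrm{supp}(\phi)$ to the complement of $U$.
\item By the definition of $H^m_{\mathrm{loc}}(U)$, there is $g\in H^m$ with $g=f$ on $V$.
\item We claim $\phi f=\phi g$ as distributions on $\R^n$. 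For a test function $\chi$, we have $\langle \phi f,\chi\rangle=\langle f,\phi\chi\rangle$. The function $\phi\chi$ is supported in $\mathrm{supp}(\phi)\subset V$, so this equals $\langle g,\phi\chi\rangle=\langle \phi g,\chi\rangle$.
\item By \cref{thm: multiply by smooth functions}, $\phi g\in H^m$, so $\phi f\in H^m$.
\end{enumerate}
One remark on step 3: $\phi f$ is a priori a distribution on $U$. We regard it as a distribution on $\R^n$ via $\langle \phi f,\chi\rangle:=\langle f,\phi\chi\rangle$, which makes sense because $\phi\chi\in C_c^\infty(U)$.

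For the converse, assume $\phi f\in H^m$ for every $\phi\in C_c^\infty(U)$, and let $V$ be precompact with $\overline{V}\subset U$.
\begin{enumerate}
\item Since $\overline{V}$ is compact and contained in $U$, there is a cutoff $\phi\in C_c^\infty(U)$ with $\phi\equiv 1$ on a neighborhood of $\overline{V}$. This is the standard construction: mollify the indicator function of a small neighborhood of $\overline{V}$.
\item Set $g=\phi f\in H^m$.
\item For $\chi$ supported in $V$, we have $\phi\chi=\chi$, so $\langle g,\chi\rangle=\langle f,\phi\chi\rangle=\langle f,\chi\rangle$. Hence $g=f$ on $V$, and so $f\in H^m_{\mathrm{loc}}(U)$.
\end{enumerate}

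The main obstacle is the forward direction, namely checking that the hypothesis of \cref{thm: multiply by smooth functions} holds for $\phi\in C_c^\infty$. This requires the rapid decay of $\widehat{\phi}$. That decay follows from integrating by parts: $\xi^\beta\widehat{\phi}$ is, up to a constant, the Fourier transform of $\partial^\beta\phi\in L^1$, so it is bounded for every $\beta$. Consequently $(1+|\xi|^2)^{N}|\widehat{\phi}|$ is bounded for every $N$, and taking $N$ large makes $(1+|\xi|^2)^{a/2}|\widehat{\phi}|$ integrable. The existence of cutoff functions in the converse direction is standard and we will take it as known.
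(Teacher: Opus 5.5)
Your proof is correct and follows the same route as the paper. For the forward direction, $f$ agrees with some $g\in H^m$ near $\mathrm{supp}(\phi)$, so $\phi f=\phi g\in H^m$ by \cref{thm: multiply by smooth functions}; for the converse, $g=\phi f$ with a cutoff $\phi\equiv 1$ near $\overline{V}$. You also supply details the paper leaves implicit: the explicit choice of $V$, the pairing computation showing $\phi f=\phi g$, and the rapid decay of $\widehat{\phi}$. One small correction: contrary to your opening remark, the converse uses the hypothesis directly and does not need the multiplication theorem.
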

\begin{proof}
    If $f\in H_{\mathrm{loc}}^m(U)$ and $\phi\in C_c^\infty(U)$, then there exists $g\in H^s$ such that $f$ agrees with $g$ on a neighborhood of $\mathrm{supp}(\phi)$. It follows that $\phi f= \phi g \in H^m$, since $\phi$ satisfies the conditions of \cref{thm: multiply by smooth functions} for any $a$. 

    For the other direction, given a precompact open set $V$ with $\overline{V}\subset U$, there exists $\phi\in C_c^\infty(U)$ with $\phi=1$ on a neighborhood of $\overline{V}$. Then, setting $g=\phi f$, $g=\phi f\in H^m$ and $g = f$ on $V$, so $f\in H_{\mathrm{loc}}^m(U)$.
\end{proof}
\section{Application to PDEs}
Sobolev spaces arise naturally in the weak formulation of PDEs.

Consider the classical Poisson problem:
\[
- \Delta u = f \quad \text{in } \Omega, \qquad u = 0 \quad \text{on } \partial \Omega,
\]
where $\Omega \subset \mathbb{R}^n$ is a bounded open domain with smooth boundary, and $f \in L^2(\Omega)$. A classical solution requires $u \in C^2(\Omega) \cap C(\overline{\Omega})$, which may be too restrictive. We instead aim for a \textit{weak solution}.

The first step is to multiply by a test function $\varphi\in C_c^\infty(\Omega)$
\[
-\Delta u \cdot \varphi = f \cdot \varphi.
\]
Integrating over $\Omega$, we obtain
\[
\int_\Omega -\Delta u \cdot \varphi \, dx = \int_\Omega f \cdot \varphi \, dx.
\]
Using Green's identity (with $\varphi = 0$ on $\partial \Omega$), we have
\[
\int_\Omega \nabla u \cdot \nabla \varphi \, dx = \int_\Omega f \cdot \varphi \, dx.
\]
This relation is satisfied by any classical solution to the Poisson problem. But this now lets us define a broader notion of a solution to the equation, using the Sobolev space $H_0^1(\Omega)$ of $L^2$ functions with $L^2$ first derivatives and zero trace on the boundary.

Precisely, we say that $u \in H_0^1(\Omega)$ is a \textit{weak solution} to the Poisson equation if
\[
\int_\Omega \nabla u \cdot \nabla v \, dx = \int_\Omega f \cdot v \, dx \quad \forall v \in H_0^1(\Omega).
\]

Now, we claim that this formulation is a well-posed problem.

To see this, let \[a(u,v) = \int_\Omega \nabla u \cdot \nabla v \, dx\quad \text{and} \quad \ell(v) = \int_\Omega f \cdot v \, dx.\]

Then $a$ is coercive and continuous on $H_0^1(\Omega)$, and $\ell$ is bounded. By the Lax--Milgram theorem, there exists a unique $u \in H_0^1(\Omega)$ solving the problem.

Thus, the classical PDE is reformulated into a well-posed variational problem in a Sobolev space.

Extending this idea, we now consider the Poisson equation with Neumann boundary conditions.
That is,
\[
- \Delta u = f \quad \text{in } \Omega, \qquad \frac{\partial u}{\partial n} = 0 \quad \text{on } \partial \Omega,
\]
with $f \in L^2(\Omega)$ and $\Omega$ smooth and bounded.

Again, we multiply by a test function $\varphi\in C^{\infty}(\overline{\Omega})$:
\[
-\Delta u \cdot \varphi = f \cdot \varphi,
\]
and then we integrate.
\[
\int_\Omega -\Delta u \cdot \varphi \, dx = \int_\Omega f \cdot \varphi \, dx.
\]
Green's identity gives
\[
\int_\Omega \nabla u \cdot \nabla \varphi \, dx - \int_{\partial \Omega} \frac{\partial u}{\partial n} \cdot \varphi \, dS = \int_\Omega f \cdot \varphi \, dx.
\]
But since $\partial u / \partial n = 0$ on $\partial \Omega$, the boundary term vanishes, and we obtain
\[
\int_\Omega \nabla u \cdot \nabla \varphi \, dx = \int_\Omega f \cdot \varphi \, dx.
\]
This is just like before, and we define the weak formulation in the same way: $u \in H^1(\Omega)$ is a \textit{weak solution} if
\[
\int_\Omega \nabla u \cdot \nabla v \, dx = \int_\Omega f \cdot v \, dx \quad \forall v \in H^1(\Omega).
\]
Note that here, $v$ is not required to vanish at the boundary.

This Neumann problem is solvable only if we have
\[
\int_\Omega f \, dx = 0.
\]
Moreover, the solution is unique \textit{up to a constant}.
This also follows from the Lax-Milgram theorem: let
\[
V := \left\{ v \in H^1(\Omega) \mid \int_\Omega v \, dx = 0 \right\}.
\]
On $V$, the bilinear form
\[
a(u,v) = \int_\Omega \nabla u \cdot \nabla v \, dx
\]
is coercive and continuous. The functional $\ell(v) = \int_\Omega f \cdot v \, dx$ is bounded. Then the Lax-Milgram theorem ensures a unique solution $u \in V$.

We have the following comparisons to to the Dirichlet case:

\begin{center}
\begin{tabular}{ccc}
\toprule
\textbf{Feature} & \textbf{Dirichlet BC} & \textbf{Neumann BC} \\ [0.5ex]
\hline
Function Space & $H_0^1(\Omega)$ & $H^1(\Omega)$\Tstrut \\ [1ex]
Boundary & $u = 0$ & $\partial u/\partial n = 0$ \\ [1ex]
Uniqueness & Unique & Up to constant \\ [1ex]
Compatibility & Not needed & $\int_\Omega f = 0$ \\ [1ex]
\bottomrule
\end{tabular}
\end{center}

\section{The Elliptic Regularity Theorem}
Having explored how Sobolev spaces facilitate the formulation and analysis of boundary value problems such as the Poisson equation with Dirichlet and Neumann conditions, we now turn to a fundamental aspect of weak solutions: their regularity. In this section, we present the elliptic regularity lemma, which characterizes the smoothness properties of solutions to elliptic PDEs in Sobolev spaces.

First, consider the ODE case. Let \[L= \sum_0^m a_j\left(\frac d{dx}\right)^j\] be a differential operator, where each $a_j$ is a $C^\infty$ function of $x$. Furthermore, assume that $a_m$ never vanishes. Then, it is easy to prove that smooth data give smooth solutions. More precisely, if $Lu=f$ and $f$ is $C^k$ on some interval $I$, $u$ is $C^{k+m}$ on $I$.

However, such smoothness results do not hold in general for PDEs. As an example, consider the wave equation $u_{tt}-u_{xx}=0$. For any locally integrable function $f$, $u=f(x-t)$ is a (weak) solution of the equation, but $u$ only has as much smoothness as $f$. What the Elliptic Regularity Theorem aims to do is identify a large class of PDEs for which we can prove a strong regularity theorem similar to the ODE case.

Let \[P(\partial) = \sum_{|\alpha|\le m} c_\alpha \partial^\alpha\] be a partial differential operator with constant coefficients $c_\alpha$. Furthermore, we assume that $c_\alpha \ne 0$ for some $\alpha$ with $|\alpha| = m$, which means that $P$ is order $m$. We define the \textit{principal symbol} $P_m$ to be
\[P_m(\xi) = \sum_{|\alpha| = m} c_\alpha (i\xi)^\alpha.\]
We also write
\[P(\xi) = \sum_{|\alpha| \le m} c_\alpha (i\xi)^\alpha.\]
In other words, we replace the $\partial$ in the definition of $P(\partial)$ with $i\xi$; and we can think of $P(\xi)$ as the Fourier transform of $P(\partial)$.
We call $P$ \textit{elliptic} if $P_m(\xi)\ne 0$ for all nonzero $\xi\in \R^n$. Intuitively, we can think of ellipticity meaning that $P$ is ``$m$th order in all directions''. For example, the Laplacian $\Delta$ is elliptic because the principal symbol is
\[-(\xi_1^2 + \cdots + \xi_n^2) = - |\xi|^2,\]
while the heat operator $\partial_t - \Delta$ is not because the principal symbol is
\[\xi_1^2 +\cdots + \xi_{n}^2,\]
which is not nonzero for all nonzero $(\xi,\tau)\in \R^{n+1}$ (for example $\xi = (0,0,\ldots, 0)$ and $\tau = 1$). Similarly, the wave operator is not elliptic.

\begin{lem}\label{lem: elliptic symbols}
    Suppose $P$ is order $m$. Then $P$ is elliptic if and only if there exist constants $C,R>0$ such that $|P(\xi)| \ge C |\xi|^m$ for all $|\xi| \ge R$.
\end{lem}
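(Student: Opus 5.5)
We prove the two directions separately. Both rest on the fact that $P_m$ is homogeneous of degree $m$, i.e. $P_m(t\xi) = t^m P_m(\xi)$ for $t>0$, and on the observation that the lower-order part $P(\xi)-P_m(\xi)$ is a polynomial of degree at most $m-1$. Consequently there is a constant $B$, for instance $B=\sum_{|\alpha|<m}|c_\alpha|$, such that $|P(\xi)-P_m(\xi)| \le B|\xi|^{m-1}$ whenever $|\xi|\ge 1$.

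For the forward direction, assume $P$ is elliptic. The function $|P_m|$ is continuous and nonvanishing on the unit sphere $S^{n-1}$, which is compact, so it attains a positive minimum $2C$ there. Homogeneity then gives
\[
|P_m(\xi)| = |\xi|^m \,|P_m(\xi/|\xi|)| \ge 2C|\xi|^m \quad \text{for all } \xi\neq 0.
\]
By the triangle inequality, for $|\xi|\ge 1$ we get
\[
|P(\xi)| \ge 2C|\xi|^m - B|\xi|^{m-1} \ge C|\xi|^m
\]
as soon as $|\xi| \ge B/C$. Taking $R=\max(1,B/C)$ yields the desired constants.

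For the converse, assume $|P(\xi)|\ge C|\xi|^m$ for $|\xi|\ge R$, and suppose for contradiction that $P_m(\eta)=0$ for some $\eta\neq 0$. We may normalize so that $|\eta|=1$. For $t\ge R$ we have $P_m(t\eta)=t^mP_m(\eta)=0$, and therefore
\[
Ct^m \le |P(t\eta)| = |P(t\eta)-P_m(t\eta)| \le Bt^{m-1}.
\]
This says $Ct\le B$ for every $t\ge \max(R,1)$, which fails once $t$ is large. Hence $P_m$ does not vanish away from the origin, and $P$ is elliptic.

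The only step that needs any care is the lower bound on the sphere in the forward direction. It uses compactness together with the continuity of $P_m$, and this is exactly where the hypothesis $\xi\neq 0$ in the definition of ellipticity enters. The remainder of the argument is the routine estimate of the lower-order terms.
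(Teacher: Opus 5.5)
Your proof is correct. The three steps all check out: the bound $|P_m(\xi)|\ge 2C|\xi|^m$ from homogeneity and the positive minimum on the compact sphere, the lower-order estimate $|P(\xi)-P_m(\xi)|\le B|\xi|^{m-1}$ for $|\xi|\ge 1$, and the contradiction $Ct\le B$ along the ray $t\eta$.

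The paper gives no proof of this lemma and only calls it straightforward; your argument is the standard one that fills that gap. Your choice $R=\max(1,B/C)\ge 1$ also matches what the paper implicitly assumes when it applies the lemma with $R\ge 1$ in the next lemma.
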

The proof of this lemma is relatively straightforward. Using this, we get another lemma.
\begin{lem}\label{lem: regularity for all of R}
    If $P(\partial)$ is elliptic of order $m$ and $u\in H^s$ and $P(\partial)u \in H^s$, then $u\in H^{s+m}$.
\end{lem}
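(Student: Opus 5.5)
The plan is to work entirely on the Fourier side, using the definition $\Vert f\Vert_{(s)} = \Vert (1+|\xi|^2)^{s/2}\widehat f\Vert_2$, and to split frequency space into a bounded region and its complement. Since $P(\partial)$ has constant coefficients, $\widehat{P(\partial)u}(\xi) = P(\xi)\widehat u(\xi)$. (If the Fourier transform convention introduces factors of $2\pi$, they only rescale $\xi$ inside the polynomial and do not affect ellipticity.) The hypotheses then say that
\[(1+|\xi|^2)^{s/2}\widehat u \in L^2 \quad\text{and}\quad (1+|\xi|^2)^{s/2}P(\xi)\widehat u\in L^2 .\]
The goal is to show that $(1+|\xi|^2)^{(s+m)/2}\widehat u\in L^2$.

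\textbf{Step 1: large frequencies.} By \cref{lem: elliptic symbols} there are constants $C,R>0$ with $|P(\xi)|\ge C|\xi|^m$ for $|\xi|\ge R$. Enlarging $R$ if necessary (say $R\ge 1$), we have $1+|\xi|^2\le 2|\xi|^2$ on $|\xi|\ge R$. Hence on this region
\[(1+|\xi|^2)^{m/2}\le 2^{m/2}|\xi|^m\le 2^{m/2}C^{-1}|P(\xi)|.\]
Multiplying by $(1+|\xi|^2)^{s/2}|\widehat u|$ and integrating gives
\[\int_{|\xi|\ge R}(1+|\xi|^2)^{s+m}|\widehat u|^2 \le 2^mC^{-2}\Vert P(\partial)u\Vert_{(s)}^2 .\]

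\textbf{Step 2: small frequencies.} On the bounded set $|\xi|<R$, the factor $(1+|\xi|^2)^{m}$ is at most $(1+R^2)^m$. Therefore
\[\int_{|\xi|<R}(1+|\xi|^2)^{s+m}|\widehat u|^2\le (1+R^2)^m\Vert u\Vert_{(s)}^2 .\]

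\textbf{Step 3: conclusion.} Adding the two estimates shows $u\in H^{s+m}$, together with the a priori bound
\[\Vert u\Vert_{(s+m)}\le C'\bigl(\Vert u\Vert_{(s)}+\Vert P(\partial)u\Vert_{(s)}\bigr).\]

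The only delicate point is Step 1: ellipticity gives the lower bound only for $|\xi|$ large, because lower-order terms can make $P(\xi)$ vanish at finitely many bounded frequencies. This is exactly why the hypothesis $u\in H^s$ is needed. It controls the low-frequency piece, where $P$ gives no information. \cref{lem: elliptic symbols} is what handles the high-frequency piece.

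A minor technical remark: for negative $s$, the elements of $H^s$ should be read as tempered distributions with $\widehat u$ a function, rather than as $L^2$ functions. The argument above is unchanged, since it only manipulates the weighted $L^2$ norms of $\widehat u$.
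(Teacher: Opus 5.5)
Your proof is correct and is essentially the paper's argument. Both split frequency space at $|\xi|=R\ge 1$, use the ellipticity bound $|P(\xi)|\ge C|\xi|^m$ to get $(1+|\xi|^2)^{m/2}\lesssim |P(\xi)|$ at high frequencies, and bound the weight by a constant at low frequencies, giving $(1+|\xi|^2)^{(s+m)/2}|\widehat u|\le C'(1+|\xi|^2)^{s/2}\bigl(|P(\xi)\widehat u|+|\widehat u|\bigr)$. One small slip in your closing remark: the zero set of $P$ is bounded but need not be finite (for $P(\partial)=\Delta+1$ with $n\ge 2$ it is the whole unit sphere); boundedness is all your argument uses.
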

\begin{proof}
    We know that $(1+|\xi|^2)^{s/2}\widehat{u}\in L^2$ and $(1+|\xi|^2)^{s/2}P(\xi) \widehat{u}\in L^2$. By the previous lemma, for some $R\ge 1$ we have
    \[(1+|\xi|^2)^{m/2} \le 2^m |\xi|^m \le C^{-1}2^m |P(\xi)|\]
    for all $|\xi| \ge R$. For $|\xi|\le R$, $(1+|\xi|^2)^{m/2}$ is bounded by some constant. Thus,
    \[(1+|\xi|^2)^{(s+m)/2}|\widehat{u}| \le C' (1+|\xi|^2)^{s/2}(|P(\xi) \widehat{u}|+|\widehat{u}|)\in L^2\]
    for some constant $C'$. This implies $u\in H^{s+m}$. 
\end{proof}
The previous lemma is nice, but it only applies to situations where the domain is all of $\R^n$ and the functions vanish at infinity. We would like to be able to deal with different domains, and that is precisely the content of the Elliptic Regularity Theorem. The idea is that we replace $H^s$ with the localized Sobolev space $H_\mathrm{loc}^s(\Omega)$.
\begin{thm}[Elliptic Regularity Theorem]\label{thm: ellitic regularity}
    Suppose that $L$ is a constant-coefficient elliptic differential operator of order $m$, $\Omega$ is an open set in $\R^n$ and $u$ is a distribution on $\Omega$. If $Lu \in H^s_{\mathrm{loc}}(\Omega)$ for some $m\in \R$, then $u\in H_{\mathrm{loc}}^{s+m}(\Omega)$.
\end{thm}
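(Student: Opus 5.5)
We will prove the theorem by localizing to $\R^n$ and applying \cref{lem: regularity for all of R}. By \cref{prop: localized sobolev space} it suffices to show that $\phi u\in H^{s+m}$ for every $\phi\in C_c^\infty(\Omega)$. We will use the standard fact that a compactly supported distribution lies in $H^t$ for some $t$, possibly very negative. This holds because it has finite order, so its Fourier transform has polynomial growth. Consequently $\phi u\in H^{t}$ for some $t$. The plan is to improve $t$ step by step: from $\phi u\in H^t$ for all such $\phi$ we want to get $\phi u\in H^{\min(t+1,\,s+m)}$. We do this by writing $L(\phi u)$ as a cutoff of $Lu$ plus lower-order error terms.

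The key identity is the Leibniz rule for distributions,
\[L(\phi u)=\phi\, Lu+\sum_{|\beta|\ge1}\frac{1}{\beta!}(\partial^\beta\phi)\,L^{(\beta)}u,\]
where $L^{(\beta)}$ is the constant-coefficient operator with symbol $\partial_\xi^\beta P$, up to powers of $i$. Each $L^{(\beta)}$ has order at most $m-|\beta|\le m-1$. The first term lies in $H^s$ by hypothesis and \cref{prop: localized sobolev space}. For the error terms, choose $\psi\in C_c^\infty(\Omega)$ with $\psi=1$ on a neighborhood of $\mathrm{supp}\,\phi$. Then $(\partial^\beta\phi)L^{(\beta)}u=(\partial^\beta\phi)L^{(\beta)}(\psi u)$. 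If $\psi u\in H^t$, then $L^{(\beta)}(\psi u)\in H^{t-m+1}$, because a differential operator of order $k$ maps $H^t$ to $H^{t-k}$; on the Fourier side this is multiplication by a polynomial of degree $k$. Multiplying by the smooth compactly supported function $\partial^\beta\phi$ preserves this space by \cref{thm: multiply by smooth functions}. Hence $L(\phi u)\in H^{\min(s,\,t-m+1)}$.

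Now apply \cref{lem: regularity for all of R} with $s'=\min(s,t-m+1)$. We need $\phi u\in H^{s'}$. This holds when $s'\le t$, and that is automatic since $t-m+1\le t$ for $m\ge1$. The lemma gives $\phi u\in H^{\min(s+m,\,t+1)}$. The hypothesis $\psi u\in H^t$ is justified by running the same argument with $\psi$ in place of $\phi$, so we induct using a nested chain of cutoffs. Fix $\phi$ and choose $\phi=\psi_0,\psi_1,\dots,\psi_N$ in $C_c^\infty(\Omega)$, each equal to $1$ near the support of the previous one. All of them are supported in a fixed compact subset of $\Omega$, so $\psi_N u\in H^{t_0}$ for a single $t_0$. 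Each step down the chain gains one derivative, capped at $s+m$. After $N\ge s+m-t_0$ steps we obtain $\phi u\in H^{s+m}$.

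I expect the bookkeeping of the error terms to be the main obstacle. Specifically, we must check that $L(\phi u)-\phi Lu$ involves only derivatives of $u$ of order at most $m-1$, localized by $\psi$, and that the mapping properties on $H^t$ hold for real, possibly negative, $t$. Both facts are clean on the Fourier side, using the multiplier $(1+|\xi|^2)^{t/2}$ and \cref{thm: multiply by smooth functions}. The initial step, $\psi_N u\in H^{t_0}$, relies on the finite-order property of compactly supported distributions. The paper has not stated this, so I would invoke it as a standard fact.
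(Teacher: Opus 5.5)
Your proposal is correct and follows essentially the same route as the paper. You reduce to $\phi u\in H^{s+m}$ via \cref{prop: localized sobolev space}, start from some $H^{\sigma}$ regularity of a compactly supported cutoff of $u$, and bootstrap one derivative per step along a nested chain of cutoffs, using that $L(\phi u)-\phi Lu$ has order $m-1$ together with \cref{lem: regularity for all of R}. Your explicit Leibniz expansion, the cap $\min(t+1,\,s+m)$ (which avoids making $s+m-\sigma$ an integer), and the finite-order justification of the starting regularity (which the paper only asserts) are slightly more careful versions of the same steps.
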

\begin{proof}[Sketch of Proof]
    Using \cref{prop: localized sobolev space}, it suffices to show that $\phi u \in H^{s+m}$ for all $\phi\in C_c^\infty(\Omega)$. We can find a precompact open set $V$ such that $\phi$ is supported in $V$ and $\overline{V}\subset \Omega$. Then, pick $\psi\in C_c^\infty(\Omega)$ with $\psi=1$ on $\overline{V}$. 

    We can then find that $\psi u \in H^{\sigma}$ for some $\sigma\in \R$. Since $H^{x}\subset H^y$ for $x>y$, we may decrease $\sigma$ such that $k \coloneqq s+m-\sigma$ is a positive integer. We then choose functions $\psi = \psi_0,\psi_1,\ldots,\psi_k=\phi$, such that:
    \begin{itemize}
        \item $\psi_1,\ldots,\psi_{k-1}\in C_c^{\infty}$,
        \item $\psi_j = 1$ on a neighborhood of $\mathrm{supp}(\psi)$ for $j\le k-1$,
        \item $\psi_j$ is supported in $\{x: \psi_{j=1}\}$.
    \end{itemize}
    Intuitively, we can visualize the $\psi_j$ as getting closer and closer to the function that is $1$ on the support of $\phi$ and $0$ outside the support of $\phi$.

    We prove by induction that $\psi_j u \in H^{\sigma + j}$.

    To prove this, we use the following crucial observation.
    For any $\zeta \in C_c^\infty$, if we define
    \[[L,\zeta]f = L(\zeta f) - \zeta L f,\]
    then by using the product rule for derivatives on the first term, we see that the $m$th order derivatives of $f$ actually cancel, so $[L,\zeta]$ is a $(m-1)$th order operator! Thus if $f\in H^t$, it follows that $[L,\zeta]f\in H^{t-(m-1)}$. 

    Now we can do the induction. The base case $\psi_0 u \in H^{\sigma}$ follows from the definition of $\sigma$. Now suppose that $\psi_j u \in H^{\sigma + j}$ for some $j<k$. Then, we have
    \begin{align*}
        L(\psi_{j+1}u) = \psi_{j+1}Lu + [L,\psi_{j+1}]u 
    \end{align*}
    By the definition of the $\psi_j$, $\psi_{j+1}$ is nonzero only when $\psi_j =1$. It follows that we can replace the $u$ in $[L,\psi_{j+1}]u$ with $\psi_j u$ without changing anything. Since $\psi_j u \in H^{\sigma + j}$, $[L,\psi_{j+1}]\psi_j u \in H^{\sigma + j + 1-m}$. We also have by assumption, $\psi_{j+1}Lu \in H^s$. Thus, $L(\psi_{j+1}u) \in H^{\sigma+j+ 1 -m}$. Finally, $\psi_{j+1}u = \psi_{j+1}\psi_j u \in H^{\sigma + j}$. Using \cref{lem: regularity for all of R}, it follows that $\psi_{j+1}u\in H^{\sigma + j+1}$, proving the inductive hypothesis.

    It follows that when $j=k$, $\phi u \in H^{\sigma+k}=H^{s+m}$. Since $\phi$ was an arbitrary element of $C_c^\infty(\Omega)$, it follows that $u\in H_\mathrm{loc}^{s+m}(\Omega)$, and we are done.
\end{proof}
This theorem has many important corollaries.
\begin{cor}\label{cor: Lu = Cinfty implies u is Cinfty}
    Suppose that $L$ is a constant-coefficient elliptic differential operator of order $m$, $\Omega$ is an open set in $\R^n$, and $u$ is a distribution on $\Omega$. If $Lu\in C^\infty(\Omega)$, $u\in C^\infty(\Omega)$.
\end{cor}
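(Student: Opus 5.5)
The plan is to reduce everything to the Elliptic Regularity Theorem (\cref{thm: ellitic regularity}) together with the local smoothness criterion of \cref{cor: infinite L2 differentiable implies Cinfty}. Since smoothness is a local property, it suffices to show that for every $\phi\in C_c^\infty(\Omega)$ the function $\phi u$ is $C^\infty$ on $\R^n$. Indeed, given $x_0\in\Omega$, we choose $\phi$ equal to $1$ on a neighborhood of $x_0$; then $u=\phi u$ on that neighborhood, so $u$ is smooth near $x_0$.

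The first step is to show that $Lu\in H^s_{\mathrm{loc}}(\Omega)$ for every $s\ge 0$. By \cref{prop: localized sobolev space}, this amounts to showing $\phi\, Lu\in H^s$ for every $\phi\in C_c^\infty(\Omega)$. Now $\phi\,Lu$ is a $C_c^\infty$ function on $\R^n$, because $Lu$ is smooth on $\Omega$ and $\phi$ has compact support inside $\Omega$. Any $g\in C_c^\infty$ lies in $H^s$ for every $s$: its Fourier transform is a Schwartz function, so $(1+|\xi|^2)^{s/2}\widehat g\in L^2$. Alternatively, for integer $s$ one can use the original definition, since all derivatives of $g$ are compactly supported and continuous, hence in $L^2$. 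Since $H^x\subset H^y$ for $x>y$, this gives $Lu\in H^s_{\mathrm{loc}}(\Omega)$ for all real $s$.

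The second step applies \cref{thm: ellitic regularity} to get $u\in H^{s+m}_{\mathrm{loc}}(\Omega)$ for every $s$, hence $u\in H^t_{\mathrm{loc}}(\Omega)$ for every $t$. By \cref{prop: localized sobolev space} again, $\phi u\in H^t$ for every $t$ and every $\phi\in C_c^\infty(\Omega)$. \Cref{cor: infinite L2 differentiable implies Cinfty} then shows that $\phi u\in C^\infty$. More precisely, the distribution $\phi u$ is represented by a $C^\infty$ function, because the Sobolev Embedding Theorem produces a continuous representative via Fourier inversion. By the localization remark in the first paragraph, $u$ agrees on a neighborhood of each point of $\Omega$ with a smooth function.

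The main obstacle is the final gluing step. We must check that the local smooth representatives are compatible, so that $u$ is a single $C^\infty$ function on $\Omega$ and not just locally smooth as a distribution. If $\phi_1 u$ and $\phi_2 u$ are both smooth and $\phi_1=\phi_2=1$ on an open set $W$, then the two smooth functions agree as distributions on $W$. Two continuous functions that define the same distribution are equal pointwise, so the representatives coincide on $W$. The pasting proposition at the start of the paper (equality on each $V_\alpha$ implies equality on $\bigcup V_\alpha$) then shows that the glued smooth function equals $u$ as a distribution on all of $\Omega$. The other ingredients, namely that $C_c^\infty\subset H^s$ for all $s$ and the application of the theorem for each $s$, are routine.
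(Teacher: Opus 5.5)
Your proposal is correct and follows the same route as the paper. Smoothness of $Lu$ gives $Lu\in H^s_{\mathrm{loc}}(\Omega)$ for all $s$, the Elliptic Regularity Theorem then gives $u\in H^t_{\mathrm{loc}}(\Omega)$ for all $t$, and the Sobolev-embedding corollary yields smoothness. The only difference is that you make explicit the localization through $\phi u$ and the gluing of local smooth representatives, which the paper's one-line proof leaves implicit when it applies the global corollary directly to a locally Sobolev distribution.
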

\begin{proof}
    Since $Lu\in C^\infty(\Omega)$, $Lu\in H^{\mathrm{loc}}_{s}(\Omega)$ for all $s$, so $u\in H^{\mathrm{loc}}_{s+m}(\Omega)$ for all $s$. Then by \cref{cor: infinite L2 differentiable implies Cinfty}, $u\in C^\infty(\Omega)$.
\end{proof}
As an example of this, consider the Laplace equation $\Delta u =0$. Since $\Delta$ is elliptic and $0\in C^\infty$, we get that any distributional solution must also be $C^\infty$. 

Secondly, consider the equation $Lu = 0$, where $L = \partial_1 + i\partial_2$ on $\R^2$, known as the \textit{Cauchy-Riemann} equation. Considering $u(x_1,x_2)$ as a function on $\C$: $u(z)=u(x_1+ix_2)$, the Cauchy-Riemann tells us if $u$ is \textit{holomorphic}, i.e. it is complex differentiable. By \cref{cor: Lu = Cinfty implies u is Cinfty}, it follows that all holomorphic functions are $C^\infty$, i.e. they are infinitely differentiable. We have just recovered a major result in complex analysis!

\section{A Regularity Theorem for the Heat Equation}
The heat operator $\partial_t - \Delta$ is not elliptic, so the Elliptic Regularity theorem does not hold. However, using similar arguments, we can prove a weaker version of it.

We are working in $\R^{n+1}$ with the coordinated $(x,t)$ and dual coordinates $(\xi,\tau)$. As before, we set $P(\partial) = \partial_t - \Delta$ and $P(\xi,\tau) = i\tau + |\xi|^2$. 

\begin{lem}
    There exist positive $C,R$ such that $|\xi| |(\xi,\tau)|^2 \le C |P(\xi,\tau)|$ for $|(\xi,\tau)|>R$.
\end{lem}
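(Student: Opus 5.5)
The approach is to exploit the quasi-homogeneity of the heat symbol $P(\xi,\tau) = i\tau + |\xi|^2$ under the parabolic dilations $(\xi,\tau)\mapsto(\lambda\xi,\lambda^2\tau)$, for which $P(\lambda\xi,\lambda^2\tau)=\lambda^2P(\xi,\tau)$. This plays the role that ordinary homogeneity of $P_m$ plays in \cref{lem: elliptic symbols}. The first step is to write
\[|P(\xi,\tau)|^2=\tau^2+|\xi|^4,\]
which is exact, so everything reduces to comparing the left side $|\xi|\,|(\xi,\tau)|^2$ with $(\tau^2+|\xi|^4)^{1/2}$ on the region $|(\xi,\tau)|>R$.

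Next, I would introduce the parabolic ``norm'' $\rho(\xi,\tau)=(|\xi|^4+\tau^2)^{1/4}$. It satisfies $\rho(\lambda\xi,\lambda^2\tau)=\lambda\rho(\xi,\tau)$, and by definition $|P|=\rho^2$. The set $\Sigma=\{\rho=1\}$ is compact and avoids the origin. Every point with $\rho>0$ can be written as a parabolic dilate of a point of $\Sigma$, with $\lambda=\rho$. On $\Sigma$ the quantities $|\xi|$, $|\xi|^2$, $|\tau|$ are continuous, hence bounded. Moreover $|P|=1$ there, so no vanishing of $P$ occurs away from the origin; this is the parabolic analogue of ellipticity. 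The plan is to bound the ratio
\[\frac{|\xi|\,|(\xi,\tau)|^2}{|P(\xi,\tau)|}\]
by first computing it on $\Sigma$, where it is bounded by compactness, and then tracking how each factor scales under the dilation. One has $|\xi|\mapsto\lambda|\xi|$ and $|P|\mapsto\lambda^2|P|$. The Euclidean norm obeys $|(\xi,\tau)|\le\max(\lambda,\lambda^2)\,|(\xi_0,\tau_0)|$, where $(\xi_0,\tau_0)$ denotes the base point on $\Sigma$. The region $|(\xi,\tau)|>R$ corresponds to $\lambda$ large, so only the behaviour as $\lambda\to\infty$ matters.

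The main obstacle is the final exponent count. After dilation, the ratio picks up a factor $\lambda^{1+2\cdot(\text{growth of }|(\xi,\tau)|)-2}$. The argument closes only if the growth of the numerator along each ray of the dilation does not exceed $\lambda^2$. I expect this to be the delicate point, because the behaviour differs in two directions. Along the $\tau$-direction, where $\xi_0=0$, the factor $|\xi|$ kills the numerator. Along the $\xi$-direction, where $\tau_0=0$, one has $|(\xi,\tau)|\sim\lambda$, and the numerator grows like $\lambda^3$. I would therefore first verify the exponent bookkeeping separately on these two extreme rays, splitting into the cases $|\tau|\ge|\xi|^2$ and $|\tau|\le|\xi|^2$. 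In the first case $|P|\sim|\tau|$ and $|(\xi,\tau)|\sim|\tau|$. In the second case $|P|\sim|\xi|^2$ and $|(\xi,\tau)|\sim|\xi|$. In each case I would compare powers directly. If the $\xi$-dominated case produces an excess power of $|\xi|$, that is exactly where the stated constant $C$ must absorb it. I would check there whether the intended reading of the left-hand side (the precise weight in front of $|(\xi,\tau)|^2$) matches the degree-two homogeneity of $|P|$, since compactness on $\Sigma$ gives a uniform $C$ only under that degree matching.
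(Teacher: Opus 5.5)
Your parabolic-scaling bookkeeping is the right tool, but you stop exactly where it gives its verdict, and your closing suggestion does not work: a fixed constant $C$ cannot ``absorb'' an excess power of $|\xi|$, because $|\xi|$ is unbounded on $\{|(\xi,\tau)|>R\}$. The inequality as stated is false. At $\tau=0$ you get $|\xi|\,|(\xi,0)|^2=|\xi|^3$ against $|P(\xi,0)|=|\xi|^2$, so the ratio equals $|\xi|\to\infty$. More generally, along any ray $(\lambda\xi_0,\lambda^2\tau_0)$ with $\xi_0\neq0$,
\[
\frac{|\xi|\,|(\xi,\tau)|^2}{|P(\xi,\tau)|}\;\ge\;\frac{\lambda\,|\xi_0|^3}{|P(\xi_0,\tau_0)|}\longrightarrow\infty .
\]
This is just your own exponent $1+2\gamma-2=2\gamma-1>0$, where $\gamma\in\{1,2\}$ is the growth rate of $|(\xi,\tau)|$ along the ray. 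So no argument can close, and as written your proposal neither proves nor refutes the statement. The paper does not resolve this. It only remarks that the proof is similar to that of \cref{lem: elliptic symbols}, and it then uses the false bound in its proof of \cref{lem: regularity for heat all of Rd}.

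That later proof actually needs the degree-matched inequality $|\xi|\,|(\xi,\tau)|^{1/2}\le C|P(\xi,\tau)|$ for $|(\xi,\tau)|>R$; this is what gives $\partial_i u\in H^{s+1/2}$ via $(1+|(\xi,\tau)|^2)^{1/4}|\xi|\lesssim|P|$. For this corrected inequality your argument goes through unchanged. Write $(\xi,\tau)=(\lambda\xi_0,\lambda^2\tau_0)$ with $(\xi_0,\tau_0)\in\Sigma$ and $\lambda=\rho(\xi,\tau)$. For $\lambda\ge1$ you have $|(\xi,\tau)|^{1/2}\le\lambda\,|(\xi_0,\tau_0)|^{1/2}$, so the ratio is at most $\max_{\Sigma}|\xi_0|\,|(\xi_0,\tau_0)|^{1/2}<\infty$. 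The condition $\lambda\ge1$ is automatic, since $|(\xi,\tau)|>R\ge\sqrt2$ forces $\rho>1$. Your case split ($|\tau|\ge|\xi|^2$ versus $|\tau|\le|\xi|^2$) proves it equally well. The same split also gives the companion bound $|(\xi,\tau)|\le 2|P(\xi,\tau)|$ needed for $u\in H^{s+1}$. The correct write-up is therefore: exhibit the counterexample $\tau=0$, then state and prove the corrected inequality.
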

The proof is similar to the proof of \cref{lem: elliptic symbols}. Next, we have an analog of \cref{lem: regularity for all of R}.
\begin{lem}\label{lem: regularity for heat all of Rd}
    If $u, P(\partial) u\in H^2$, then $u\in H^{s+1}$ and $\partial_i u \in H^{s+1/2}$ for $1\le i\le n$ (Here $\partial_i$ denotes partial differentiation with respect to $x_i$).
\end{lem}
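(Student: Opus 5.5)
I read the hypothesis as $u,\,P(\partial)u\in H^s$; the ``$H^2$'' appears to be a typo, since the conclusion refers to $s$. The plan is to copy the proof of \cref{lem: regularity for all of R}: turn the statement into pointwise bounds on the Fourier side, using a lower bound for the symbol $P(\xi,\tau)=i\tau+|\xi|^2$ when $|(\xi,\tau)|$ is large. Write $\zeta=(\xi,\tau)$ and $\langle\zeta\rangle=(1+|\zeta|^2)^{1/2}$. The hypotheses say that $\langle\zeta\rangle^s\widehat u\in L^2$ and $\langle\zeta\rangle^s P(\zeta)\widehat u\in L^2$. The two conclusions are equivalent to
\[
\langle\zeta\rangle^{s+1}\widehat u\in L^2
\qquad\text{and}\qquad
\langle\zeta\rangle^{s+1/2}\,|\xi_i|\,\widehat u\in L^2 \quad (1\le i\le n),
\]
because $\widehat{\partial_i u}=i\xi_i\widehat u$.

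The key step is the symbol estimate
\[
|\zeta|+|\xi|\,|\zeta|^{1/2}\le C\,|P(\zeta)| \qquad\text{for } |\zeta|>R .
\]
I expect this is what the preceding lemma is meant to say. As literally printed it cannot hold: with $|\xi|=N$ and $\tau=N^2$, the left side $|\xi|\,|\zeta|^2$ is of order $N^5$, while $|P|$ is of order $N^2$. I would prove the estimate directly. First, $|P(\zeta)|^2=\tau^2+|\xi|^4$, so $|P|\ge\max(|\tau|,|\xi|^2)$. For the first term, $|\zeta|\le|\tau|+|\xi|\le|P|+|P|^{1/2}\le 2|P|$ once $|P|\ge1$. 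We do have $|P|\ge1$ for $|\zeta|$ large, because $|P|\ge\max(|\tau|,|\xi|^2)\ge \tfrac14|\zeta|$ whenever $|\zeta|\ge 4$. For the second term, $|\zeta|^{1/2}\le|\xi|^{1/2}+|\tau|^{1/2}$ gives
\[
|\xi|\,|\zeta|^{1/2}\le |\xi|^{3/2}+|\xi|\,|\tau|^{1/2}\le |\xi|^{3/2}+\tfrac12\bigl(|\xi|^2+|\tau|\bigr),
\]
which is $\lesssim|P|$ for $|P|\ge1$. The only delicate point is this step: finding the correct anisotropic weights, and in particular seeing that $|\xi|$ gains only half a derivative against $|\zeta|$.

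With the estimate in hand, the rest follows the earlier argument. For $|\zeta|>R$ with $R\ge1$, we have $\langle\zeta\rangle\le 2|\zeta|\le 2C|P(\zeta)|$. For $|\zeta|\le R$, $\langle\zeta\rangle$ is bounded. Together these give
\[
\langle\zeta\rangle^{s+1}|\widehat u|\le C'\langle\zeta\rangle^{s}\bigl(|P(\zeta)\widehat u|+|\widehat u|\bigr)\in L^2,
\]
so $u\in H^{s+1}$. In the same way, $|\xi_i|\,\langle\zeta\rangle^{1/2}\le|\xi|\,\langle\zeta\rangle^{1/2}$ is at most $C''|P(\zeta)|$ for large $|\zeta|$ and is bounded for small $|\zeta|$. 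Multiplying by $\langle\zeta\rangle^s|\widehat u|$ gives
\[
\langle\zeta\rangle^{s+1/2}|\xi_i\widehat u|\le C'''\langle\zeta\rangle^{s}\bigl(|P\widehat u|+|\widehat u|\bigr)\in L^2,
\]
that is, $\partial_i u\in H^{s+1/2}$.
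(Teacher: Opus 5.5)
Your proof is correct and is essentially the paper's argument: bound the weights $(1+|(\xi,\tau)|^2)^{1/2}$ and $|\xi|(1+|(\xi,\tau)|^2)^{1/4}$ by $C|P(\xi,\tau)|$ plus a constant, then multiply by $(1+|(\xi,\tau)|^2)^{s/2}|\widehat u|$. Where you depart from it you improve on it: the paper relies on the preceding lemma $|\xi|\,|(\xi,\tau)|^2\le C|P(\xi,\tau)|$, which is false as printed (at $\tau=0$ the left side is $|\xi|^3$ while $|P|=|\xi|^2$), and your directly proved estimate $|(\xi,\tau)|+|\xi|\,|(\xi,\tau)|^{1/2}\le C|P(\xi,\tau)|$ for large $|(\xi,\tau)|$ is the corrected version that makes the paper's chain of inequalities valid.
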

\begin{proof}
    We know that $(1+|(\xi,\tau)|^2)^{s/2}\widehat{u}\in L^2$ and $(1+|(\xi,\tau)|^2)^{s/2}P(\xi,\tau) \widehat{u}\in L^2$. By the previous lemma, for some $R\ge 1$ we have
    \[(1+|(\xi,\tau)|^2)^{1/2} \le 2 R|(\xi,\tau)| \le 2|\xi| |(\xi,\tau)|^2 \le C^{-1}2 |P(\xi,\tau)|\]
    for all $|\xi|>R$, since this means $|(\xi,\tau)|>R$. If $|\xi|\le R$, then $(1+|(\xi,\tau)|^2)^{1/2} \le (D+|\tau|^2)^{1/2}$ for a constant $D$. For $|\tau|>1$, we can bound this as $E |\tau|$ for some constant $E$. For $|\tau| \le 1$, this is bounded above by a constant $E'$. In the second case, we get $(1+|(\xi,\tau)|^2)^{1/2}<E'$. In the first case, note that $|P(\xi,\tau)| = |i\tau + |\xi|^2|>|\tau|$, so $(1+|(\xi,\tau)|^2)^{1/2} < E |P(\xi,\tau)|$.
    Combining everything together,
    \[(1+|(\xi,\tau)|^2)^{(s+1)/2}|\widehat{u}| \le C' (1+|(\xi,\tau)|^2)^{s/2}(|P(\xi,\tau) \widehat{u}|+|\widehat{u}|)\in L^2\]
    for some constant $C'$. This implies $u\in H^{s+1}$. Next, for any $1\le i\le n$,
    \[(1+|(\xi,\tau)|^2)^{(s+(1/2))/2} |\widehat{\partial_i u}| \le (1+|(\xi,\tau)|^2)^{(s+(1/2))2} |\xi | |\widehat{u}|.\]
    Now if $|(\xi,\tau)| >R$, 
    \[(1+|(\xi,\tau)|^2)^{1/4} |\xi|\le F |\xi| |(\xi,\tau)|^2\ \le DC^{-1} |P(\xi,\tau)|,\]
    for some constant $F$. If $|(\xi,\tau)|<R$, then
    \[(1+|(\xi,\tau)|^2)^{1/4} |\xi| <G\]
    for a constant $G$. Hence
    \[(1+|(\xi,\tau)|^2)^{(s+(1/2))2} |\xi | |\widehat{u}| \le C'' (1+|(\xi,\tau)|^2)^{s/2}(|P(\xi,\tau) \widehat{u}|+|\widehat{u}|)\]
    for a constant $C''$, implying that $\partial_i u\in H^{s+1/2}$.
\end{proof}

\begin{thm}[Regularity Theorem for the Heat Equation] 
    Let $\Omega$ be an open subset of $\R^{n+1}$, $u$ a distribution on $\Omega$, and $(\partial_t - \Delta)u \in H_\mathrm{loc}^{s}(\Omega)$. Then $u\in H_{\mathrm{loc}}^{s+1}(\Omega)$.
\end{thm}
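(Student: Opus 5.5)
We follow the bootstrapping scheme from the proof of the Elliptic Regularity Theorem (\cref{thm: ellitic regularity}), with \cref{lem: regularity for heat all of Rd} in place of \cref{lem: regularity for all of R}. By \cref{prop: localized sobolev space} it suffices to show $\phi u\in H^{s+1}$ for every $\phi\in C_c^\infty(\Omega)$. Fix such $\phi$, choose a precompact open $V$ with $\mathrm{supp}(\phi)\subset V$ and $\overline V\subset\Omega$, and take $\psi\in C_c^\infty(\Omega)$ with $\psi=1$ on $\overline V$. Since $\psi u$ is a compactly supported distribution, it lies in some $H^\sigma$. After decreasing $\sigma$, we may assume $s+1-\sigma = k/2$ for a positive integer $k$. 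Choose a nested chain $\psi=\psi_0,\psi_1,\dots,\psi_k=\phi$ in $C_c^\infty(\Omega)$ with $\psi_j=1$ on a neighborhood of $\mathrm{supp}(\psi_{j+1})$.

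The induction claim is $\psi_j u\in H^{\sigma+j/2}$. The half-step gain, rather than the gain of $1$ in the elliptic case, is forced by the commutator. For $\zeta\in C_c^\infty$ and $P=\partial_t-\Delta$,
\[
[P,\zeta]f=(\partial_t\zeta)f-(\Delta\zeta)f-2\sum_{i=1}^n(\partial_i\zeta)\,\partial_i f .
\]
This involves only $f$ and the \emph{spatial} first derivatives $\partial_i f$, with no $\partial_t f$. So we strengthen the inductive statement to
\[
\psi_j u\in H^{\sigma+j/2}\quad\text{and}\quad \partial_i(\psi_j u)\in H^{\sigma+j/2-1/2}\ \text{ for } 1\le i\le n,
\]
which is exactly the shape of the conclusion of \cref{lem: regularity for heat all of Rd}.

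For the inductive step, write $P(\psi_{j+1}u)=\psi_{j+1}Pu+[P,\psi_{j+1}](\psi_j u)$. Replacing $u$ by $\psi_j u$ in the commutator is legitimate because $\psi_j=1$ near $\mathrm{supp}(\psi_{j+1})$. The first term is in $H^s$ since $Pu\in H^s_{\mathrm{loc}}$. For the commutator, multiplication by smooth compactly supported functions preserves Sobolev spaces (\cref{thm: multiply by smooth functions}). Hence $(\partial_t\psi_{j+1}-\Delta\psi_{j+1})\psi_j u\in H^{\sigma+j/2}$, and $(\partial_i\psi_{j+1})\partial_i(\psi_j u)\in H^{\sigma+j/2-1/2}$ by the strengthened hypothesis. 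So $P(\psi_{j+1}u)\in H^{t}$ with $t=\min(s,\sigma+j/2-1/2)$, and also $\psi_{j+1}u=\psi_{j+1}\psi_j u\in H^{\sigma+j/2}\subset H^t$.

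Applying \cref{lem: regularity for heat all of Rd} at level $t$ gives $\psi_{j+1}u\in H^{t+1}$ and $\partial_i(\psi_{j+1}u)\in H^{t+1/2}$. When $j<k$ we have $\sigma+j/2-1/2\le s$, so $t=\sigma+j/2-1/2$. Then $t+1=\sigma+(j+1)/2$ and $t+1/2=\sigma+(j+1)/2-1/2$, which closes the induction. The base case needs $\partial_i(\psi u)\in H^{\sigma-1/2}$, which we arrange by taking $\sigma$ so that $\psi u\in H^{\sigma+1/2}$, using $\partial_i: H^{a}\to H^{a-1}$. At $j=k$ we obtain $\phi u\in H^{\sigma+k/2}=H^{s+1}$.

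The main obstacle is the commutator bookkeeping. A naive count treats $[P,\zeta]$ as a general first-order operator, costing a full derivative in all variables, and then \cref{lem: regularity for heat all of Rd}, which only returns one derivative, gives no gain at all. The fix is to track the anisotropic extra information, that spatial derivatives enjoy an additional half derivative, inside the induction; this yields a net gain of $1/2$ per step. I would double-check that the lemma, as written with $H^2$, is meant with hypothesis $u,\,Pu\in H^s$, and apply it in that form.
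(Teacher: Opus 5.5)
Your proposal is correct and follows the paper's proof essentially step for step. It uses the same nested cutoffs and the same strengthened induction $\psi_j u\in H^{\sigma+j/2}$, $\partial_i(\psi_j u)\in H^{\sigma+(j-1)/2}$, which works because $[\partial_t-\Delta,\zeta]$ involves only spatial first derivatives, and it applies the same heat-operator analogue of the whole-space lemma; your normalization $s+1-\sigma=k/2$ with $k$ steps is just the paper's $2k$ half-steps relabeled. Your explicit choice of $\sigma$ with $\psi u\in H^{\sigma+1/2}$ to secure the base case, and your reading of the lemma's hypothesis as $u,\,P(\partial)u\in H^s$, correctly supply points that the paper's sketch leaves implicit or misprints.
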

\begin{proof}[Sketch of Proof]
    We mimic the proof of \cref{thm: ellitic regularity}. Again, it suffices to show that $\phi u \in H^{s+1}$ for all $\phi\in C_c^\infty(\Omega)$. Define $V$, $\psi$, as before. We have $\psi u \in H^\sigma$ for some $\sigma \in \R$; we can decrease $\sigma$ so that $k \coloneqq s+1-\sigma$ is an integer. Additionally; we can assume $\partial_i (\psi u) \in H^{\sigma -1/2}$ for all $i=1,\ldots, n$. Again, choose functions $\psi_j$ as before, except this time we have $2k+1$ functions $\psi_0,\ldots, \psi_{2k}=\phi$ rather than $k+1$. 
    
    We induct on $j$ to show $\psi_j u \in H^{\sigma + (j/2)}$ and $\partial_{i}(\psi_j u)\in H^{\sigma + (j-1)/2}$ for all $i=1,\ldots, n$. 
    
    For the base case, $\psi_0 u \in H^{\sigma}$ and $\partial_i (\psi_j u)\in H^{\sigma -1/2}$ by hypothesis. Now suppose $\psi_j u \in H^{\sigma + (j/2)}$ and $\partial_{i}(\psi_j u)\in H^{\sigma + (j-1)/2}$ for all $i=1,\ldots, n$, and $j<2k$. It is easily computed using the product rule that
    \[[\partial_t - \Delta, \zeta] u = (\partial_t \zeta - \Delta \zeta)u - 2 \sum_{i=1}^n (\partial_i \zeta)(\partial_i u).\]

    Then, we have
    \[L(\psi_{j+1}u) = \psi_{j+1} Lu + [L,\psi_{j+1}]u = \psi_{j+1}Lu + [L,\psi_{j+1}]\psi_j u.\]
    Then, we compute that
    \[[L,\psi_{j+1}]\psi_j u = (\partial_t \psi_{j+1}-\Delta \psi_{j+1})(\psi_j u)-2\sum_{i=1}^n (\partial_i \psi_{j+1})(\partial_i \psi_j u).\]
    By the inductive hypothesis, the first term is in $H^{\sigma + (j/2)}$ and the second is in $H^{\sigma + (j-1)/2}$. Also, $\psi_{j+1}Lu \in H^s$ by assumption. Note that $\sigma + (j-1)/2\le s$, so $L(\psi_{j+1}u) \in H^{\sigma + (j-1)/2}$. We also have $\psi_{j+1}u = \psi_{j+1}\psi_j u \in H^{\sigma + j/2}$. Thus, by \cref{lem: regularity for heat all of Rd}, $\psi_{j+1}u \in H^{\sigma + (j+1)/2}$ and $\partial_i (\psi_{j+1}u)\in H^{\sigma + j/2}$. This completes the inductive step.

    Thus, we get $\phi u \in H^{s+1}$, and as before, we are done.
\end{proof}
Note that this theorem is not as good as the Elliptic Regularity theorem, because the heat equation is second order, and we can only say that it has one extra derivative.

\section*{Conclusion}
Sobolev spaces form the backbone of modern analysis and partial differential equations. They generalize classical differentiability, allow for broader classes of solutions, and provide a unifying framework for both theoretical and applied work. This report aimed to introduce these spaces and highlight their critical role in PDE theory. For further details and rigorous proofs, we refer the reader to the standard references.

\vspace{10mm}

\vfill

\rule{5cm}{1pt}

{\footnotesize \sc Department of Mathematics, San Jos\'e State University, San Jos\'e, CA 95192}


{\footnotesize \em Email address: } \texttt{\small shihan.kanungo@sjsu.edu}

\end{document}